\renewcommand{\AA}{\mathbb A}
\newcommand{\CC}{\mathbb C}
\newcommand{\NN}{\mathbb N}
\newcommand{\QQ}{\mathbb Q}
\newcommand{\ZZ}{\mathbb Z}
\newcommand{\Gm}{{\mathbb G}_m}
\newcommand{\cO}{\mathcal O}
\newtheorem{thm}{Theorem}[section]
\newtheorem{lem}[thm]{Lemma}
\newtheorem{prop}[thm]{Proposition}
\newtheorem{question}[thm]{Question}
\theoremstyle{remark}
\newtheorem{Rk}[thm]{Remark}
\newtheorem{example}[thm]{Example}
\theoremstyle{definition}
\newtheorem{Def}[thm]{Definition}
\begin{document}

\title{Exponential-polynomial equations and dynamical return sets}
\author{Thomas Scanlon }
\thanks{This collaboration was made possible by NSF grant FRG DMS-0854839.  The first author was partially
supported by NSF grant DMS-1001556, and the second author by JSPS Grants-in-Aid 23740033.}
\address{University of California, Berkeley \\
Department of Mathematics \\
Evans Hall \\
Berkeley, CA 94720-3840 \\
USA}
\email{scanlon@math.berkeley.edu}

\author{Yu Yasufuku}
\address{Nihon University, College of Science and Technology\\
Department of Mathemtaics\\
1-8-14 Kanda-Surugadai\\
Chiyoda, Tokyo 101-8308\\
JAPAN}
\email{yasufuku@math.cst.nihon-u.ac.jp}

\maketitle

\begin{abstract}
We show that for each finite sequence of algebraic
integers $\alpha_1,\ldots,\alpha_n$ and polynomials
$P_1(x_1,\ldots,x_n;y_1,\ldots,y_n), \ldots, P_r(x_1,\ldots,x_n;y_1,\ldots,y_n)$
with algebraic integer coefficients, there are a natural number $N$,
$n$ commuting endomorphisms $\Phi_i:\Gm^N \to \Gm^N$ of the $N^\text{th}$
Cartesian power of the multiplicative group, a point $P \in \Gm^N(\QQ)$, and
an algebraic subgroup $G \leq \Gm^N$ so that the return set
$\{ (\ell_1,\ldots,\ell_n) \in \NN^n ~:~ \Phi_1^{\circ \ell_1} \circ\cdots \circ \Phi_n^{\circ \ell_n}(P) \in
G(\QQ) \}$ is identical to the set of solutions to the given exponential-polynomial
equation: $\{ (\ell_1,\ldots,\ell_n) \in \NN^n ~:~ P_1(\ell_1,\ldots,\ell_n;\alpha_1^{\ell_1},
\ldots,\alpha_n^{\ell_n}) = \cdots = P_r(\ell_1,\ldots,\ell_n;\alpha_1^{\ell_1},\ldots,\alpha_n^{\ell_n})
= 0 \}$.
\end{abstract}

\section{Introduction}

Motivated by the conclusion of Faltings' Theorem on rational points
on subvarieties of abelian varieties, Ghioca, Tucker and Zieve posed
the following question (Question 1.6 of~\cite{GTZ1})
about return sets for
finite rank algebraic dynamical systems.

\begin{question}
\label{qgtz}
Let $X$ be a variety defined over $\CC$, let $V$ be a closed subvariety of $X$,
let $S$ be a finitely   generated commutative subsemigroup of $\operatorname{End} X$,
and let $\alpha \in X(\CC)$. Do the following hold?
\begin{itemize}
\item[(a)] The intersection $V(\CC) \cap \cO_S(\alpha)$ can be written as $\cO_T (\alpha)$
where $T$ is the union of finitely many cosets of subsemigroups of $S$.

\item[(b)] For any choice of generators $\Phi_1,\ldots,\Phi_r$  of $S$, let $Z$ be the set of
tuples $(n_1, \ldots , n_r)\in \NN^r$
for which $\Phi_1^{\circ n_1} \circ \cdots \circ \Phi_r^{\circ n_r}(\alpha) \in V(\CC)$, where $\Phi_i^{\circ n_i}$ is the $n_i$-fold composition of $\Phi_i$;  then $Z$
is the union of finitely many sets of the form $z_i + (G_i \cap \NN^r)$,  where
each $G_i$ is a subgroup of $\ZZ^r$
and each $z_i \in \NN^r$.
\end{itemize}
\end{question}

There are some obvious cases in which Question~\ref{qgtz} has a negative
answer.  For example, if $X = \AA^m$ is the affine $m$-space, $\Phi_i:\AA^m \to \AA^m$ is
given by $(x_1,\ldots,x_n) \mapsto (x_1,\ldots,x_{i-1},x_i+1,x_{i+1},\ldots,x_n)$,
$\alpha = (0,\ldots,0)$ is the origin, and $V \subseteq \AA^n$ is any variety, then
$Z = V(\CC) \cap \NN^n$.  As is well-known, the set of natural number points on
a variety may be very complicated.  For another example, consider the case of
$X = \AA^2$, $\Psi_1(x,y) = (2x,y)$, $\Psi_2(x,y) = (x,y+1)$, $\alpha = (1,0)$,
and $Y = \Delta_{\AA^1} = \{  (x,y) ~:~ x = y \}$.  Then $Z = \{ (m,2^m) ~:~ m \in \NN \}$.

With these examples in mind, one might seek geometric conditions on the algebraic
dynamical system $(X,S)$ for which a positive answer to Question~\ref{qgtz} may be
expected.  In a companion paper~\cite{GTZ2}, the same authors
specialized Question~\ref{qgtz} to the case that $X = \Gm^g$ is a power of the multiplicative
group and $S$ is a semigroup of algebraic group endomorphisms.  Under various hypotheses, for
example when the differential of each $\Phi_i$ at the origin is diagonalizable, they showed that
Question~\ref{qgtz} has a positive answer, but they constructed two examples for which the
return sets are infinite but may be represented as the natural number points on a quadratic curve.

In this note we show that such examples are far from anomalous and that, in fact,
every set which may be expressed as the natural number solutions of an
exponential-polynomial equation may be realized as the return set for an algebraic
dynamical system on some power of the multiplicative group.  We proceed by running the
by-now-standard Skolem-Mahler-Lech-Chabauty argument in reverse.   That is, we
start with some easy linear algebraic calculations showing that if $R$ is any
commutative ring with no $\ZZ$-torsion, then every set of natural numbers solutions to a system of exponential-polynomial
equations over $R$ may be realized as the return set for a \emph{linear} dynamical system
over $R$.  We pull this result down from rings of integers in number fields to $\ZZ$ to show that
every set of solutions to a system of exponential-polynomial equations over $R$ may be realized
as the return set of a linear dynamical system over $\ZZ$.  Exponentiating this last linear dynamical system
we obtain the desired algebraic dynamical system on an algebraic torus.

\section{Conventions and statement of main theorem}

We include $0$ in the set $\NN$ of natural numbers.  Our fundamental object of study is the return set for an algebraic dynamical system.

\begin{Def}
Given a set $X$,
a finite sequence $\Phi_1, \ldots, \Phi_n$ of self-maps $\Phi_i:X \to X$, a point $a \in X$, and a subset $Y \subseteq X$, we define the
\emph{return set}  to be
$$
E(a,\Phi_1, \ldots, \Phi_n,Y) := \{ (\ell_1, \ldots, \ell_n) \in \NN^n  ~:~ \Phi_1^{\circ \ell_1} \circ \cdots \circ \Phi_n^{\circ \ell_n}(a) \in Y \}
$$
\end{Def}

\begin{Rk}
We shall abuse notation somewhat in the case of algebraic dynamical systems.  That is, if $X$ is a scheme over the ring $R$,
$\Phi_1,\ldots,\Phi_n$ is a sequence of commuting regular self-maps $\Phi_i:X \to X$, $Y \subseteq X$ is a subscheme and $a \in X(R)$
is an $R$-valued point of $X$, then we write $E(a,\Phi_1, \ldots, \Phi_n,Y)$ for $E(a,\Phi_1^R,\ldots,\Phi_n^R,Y(R))$.
\end{Rk}

Our main theorem is that the class of return sets for finitely generated commutative semigroups of algebraic group endomorphisms
of algebraic tori coincides with the class of exponential-polynomial sets.

\begin{Def}
\label{defexppoly}
Let $R$ be a commutative ring and $n$ a natural number.  An \emph{$R$-exponential-polynomial function of
$n$-variables} is a function $f:\NN^n \to R$ of the form
$(\ell_1,\ldots,\ell_n) \mapsto P(\ell_1,\ldots,\ell_n;\alpha_{1}^{\ell_1},\ldots,\alpha_m^{\ell_1},\ldots,\alpha_1^{\ell_n},\ldots,
\alpha_m^{\ell_n})$ for some polynomial $P$ in $n (m+1)$ variables over $R$ and elements $\alpha_1,\ldots,\alpha_m \in R$.
By an \emph{$R$-exponential-polynomial set} we mean a finite intersection of subsets of  $\NN^n$ defined by the vanishing of an $R$-exponential-polynomial function.
\end{Def}

With these definitions in place we may express our main theorem.

\begin{thm}
\label{main}
Let $\cO$ be the ring of all algebraic integers and let $Z \subseteq \NN^n$ be an $\cO$-exponential-polynomial set.  Then there are
an algebraic torus $X$ over $\QQ$, an $n$-tuple of commuting endomorphisms $\Phi_i:X \to X$, a point $P \in X(\QQ)$, and
an algebraic subgroup $Y \leq X$  for which $Z = E(P,\Phi_1,\ldots, \Phi_n,Y)$.
\end{thm}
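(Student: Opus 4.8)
The plan is to run the Skolem--Mahler--Lech--Chabauty argument in reverse, in three stages: realize $Z$ first as the return set of a \emph{linear} dynamical system over a number ring, then descend that linear system to $\ZZ$ by restriction of scalars, and finally exponentiate so as to land inside an algebraic torus.

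\textbf{Step 1 (a linear model over a number ring).} Since $Z$ is cut out by finitely many exponential-polynomial functions, built from finitely many algebraic integers $\alpha_i$ and finitely many polynomial coefficients, all of this data lies in $\mathcal{O}_K$ for some number field $K$; I fix such a $K$. I would expand each defining function as a finite sum $f(\ell_1,\dots,\ell_n)=\sum_{(a,\beta)}c_{a,\beta}\,\ell_1^{a_1}\cdots\ell_n^{a_n}\,\beta_1^{\ell_1}\cdots\beta_n^{\ell_n}$, where $a$ runs over exponent vectors bounded by $\deg f$ and each $\beta_k$ is one of the finitely many monomials in the $\alpha_i$ that occur. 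Let $F$ be the \emph{free} $\mathcal{O}_K$-module on a basis $\{e_{a,\beta}\}$ indexed by the pairs that appear, and define $\mathcal{O}_K$-linear operators $A_1,\dots,A_n$ on $F$ by the ``shift'' rule $A_k e_{a,\beta}=\beta_k\sum_{i\le a_k}\binom{a_k}{i}e_{(a_1,\dots,i,\dots,a_n),\beta}$, i.e.\ the transcription via the binomial theorem of the operation $g(\ell_1,\dots,\ell_n)\mapsto g(\ell_1,\dots,\ell_k+1,\dots,\ell_n)$ acting on the functions $\ell\mapsto\ell^a\beta^\ell$; these commute because $A_k$ only touches the $k$-th exponent coordinate and multiplies by the scalar $\beta_k$. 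With $\lambda_0\colon F\to\mathcal{O}_K$ the ``evaluation at the origin'' functional $e_{a,\beta}\mapsto\delta_{a,0}$, an induction on $\ell$ (again the binomial theorem) gives $\lambda_0\bigl(A_1^{\circ\ell_1}\cdots A_n^{\circ\ell_n}e_{a,\beta}\bigr)=\ell_1^{a_1}\cdots\ell_n^{a_n}\,\beta_1^{\ell_1}\cdots\beta_n^{\ell_n}$, hence $f(\ell)=\lambda_0\bigl(A_1^{\circ\ell_1}\cdots A_n^{\circ\ell_n}v_f\bigr)$ with $v_f:=\sum c_{a,\beta}e_{a,\beta}$. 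Taking a direct sum of one copy of $(F,A_\bullet)$ per defining function handles all the equations at once: one gets a free $\mathcal{O}_K$-module $M$, commuting $\mathcal{O}_K$-linear $A_1,\dots,A_n$ on $M$, a point $V\in M$, and a submodule $W=\ker(M\to\mathcal{O}_K^r)$ (a product of copies of $\lambda_0$) with $Z=\{\ell:A_1^{\ell_1}\cdots A_n^{\ell_n}V\in W\}$. Since $M/W$ embeds in a free module it is torsion free, a fact I will reuse below.

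\textbf{Step 2 (descent to $\ZZ$) and Step 3 (exponentiation).} Choosing a $\ZZ$-basis of $\mathcal{O}_K$ of size $d=[K:\QQ]$ identifies $M$ with $\ZZ^m$, where $m=d\cdot\operatorname{rank}_{\mathcal{O}_K}M$; under it the $\mathcal{O}_K$-linear maps $A_k$ become integer matrices, $V$ a point of $\ZZ^m$, and $W$ a subgroup $W'\le\ZZ^m$ with torsion-free quotient, i.e.\ a saturated subgroup. As this is the very same $\ZZ$-module computation, the return set is unchanged: $Z=\{\ell:A_1^{\ell_1}\cdots A_n^{\ell_n}V\in W'\}$ with $A_k\in M_m(\ZZ)$. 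Now exponentiate. An algebraic-group endomorphism of $\Gm^m$ is precisely a monomial map $\phi_A\colon(t_j)\mapsto\bigl(\prod_j t_j^{A_{1j}},\dots,\prod_j t_j^{A_{mj}}\bigr)$ for $A\in M_m(\ZZ)$, and $\phi_A\circ\phi_B=\phi_{AB}$; so I set $X:=\Gm^m$, $\Phi_k:=\phi_{A_k}$ (commuting, defined over $\QQ$), whence $\Phi_1^{\circ\ell_1}\circ\cdots\circ\Phi_n^{\circ\ell_n}=\phi_{B(\ell)}$ for $B(\ell):=A_1^{\ell_1}\cdots A_n^{\ell_n}$. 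Let $(W')^{\perp}\le\ZZ^m$ be the annihilator of $W'$ for the standard pairing, let $Y:=\bigcap_{e\in(W')^{\perp}}\ker\chi^e\le X$ be the algebraic subgroup it defines (here $\chi^e\colon(t_j)\mapsto\prod_j t_j^{e_j}$; finitely many equations suffice since $(W')^{\perp}$ is finitely generated), and let $P:=(2^{V_1},\dots,2^{V_m})\in X(\QQ)$. For any $e\in\ZZ^m$ one computes $\chi^e\bigl(\phi_{B(\ell)}(P)\bigr)=2^{\langle B(\ell)^{\top}e,\,V\rangle}=2^{\langle e,\,B(\ell)V\rangle}$, which equals $1$ exactly when $\langle e,B(\ell)V\rangle=0$ (using $2^k=1\iff k=0$, and that such powers are positive, hence never $-1$). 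Therefore $\Phi_1^{\circ\ell_1}\circ\cdots\circ\Phi_n^{\circ\ell_n}(P)\in Y(\QQ)$ iff $\langle e,B(\ell)V\rangle=0$ for all $e\in(W')^{\perp}$, iff $B(\ell)V\in\bigl((W')^{\perp}\bigr)^{\perp}=W'$ (the last equality because $W'$ is saturated), which is exactly the defining condition of $Z$. Hence $Z=E(P,\Phi_1,\dots,\Phi_n,Y)$.

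\textbf{Where the difficulty lies.} None of the three stages needs a hard estimate; the content is in choosing the models correctly. The most delicate point is the last one: the algebraic subgroup $Y$ must cut out exactly $Z$ and nothing larger. This forces two design choices --- $W'$ must be \emph{saturated} in $\ZZ^m$ so that double annihilation returns $W'$ (ensured by building $W'$ as the kernel of a map to a free group), and $P$ must be chosen ``multiplicatively generic enough'' that the character condition $\chi^e(P)=1$ detects precisely the linear relation $\langle e,V\rangle=0$; coordinates that are powers of a fixed prime do the job. A lesser point to watch is Step 1: one should work with the free module $F$ and the explicit binomial formula for the $A_k$, rather than with the (a priori non-free, relation-carrying) $\mathcal{O}_K$-span of the functions $\ell\mapsto\ell^a\beta^\ell$, so that ``closed under the shifts'' and ``$\lambda_0$ recovers $f$'' become bare computations.
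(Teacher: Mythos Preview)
Your proof is correct and follows the same three-stage strategy as the paper: build a linear dynamical system over a number ring realizing $Z$, restrict scalars to $\ZZ$, then exponentiate into a torus via $P=(2^{V_1},\dots,2^{V_m})$ and the monomial endomorphisms $\phi_A$. Your discussion of saturation of $W'$ and double annihilation makes explicit what the paper compresses into the phrase ``since $2$ has infinite order.''

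The one genuine difference is in Step~1. You use the monomial basis $\{\ell^a\beta^\ell\}$ and encode the shift $\ell_k\mapsto\ell_k+1$ directly via the binomial theorem, so your operators $A_k$ are upper-triangular with full rows $\beta_k\sum_{i\le a_k}\binom{a_k}{i}e_{(\dots,i,\dots),\beta}$. The paper instead passes to the binomial-coefficient basis $\{\binom{\boldsymbol\ell}{\mathbf j}\boldsymbol\lambda^{\boldsymbol\ell}\}$ and encodes each basic multinomial separately using commuting operators $\lambda_i(I+J^{M_i})$ with $J$ nilpotent; the weights $M_i$ come from a small number-theoretic lemma (distinct primes ensuring that $\mathbf k\cdot\mathbf M=\mathbf j\cdot\mathbf M$ forces $\mathbf k=\mathbf j$) so that a single coordinate projection reads off $\binom{\boldsymbol\ell}{\mathbf j}\boldsymbol\lambda^{\boldsymbol\ell}$. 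Your encoding is more direct and sidesteps that lemma entirely; the paper's encoding keeps each block smaller and makes the matrices sparser (two-term recurrence from Pascal's rule rather than a full binomial row), which is what lets them write down the explicit $18\times18$ example. Both are equally valid realizations of the same reverse Skolem--Mahler--Lech idea.
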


\section{Some basic lemmata on exponential-polynomials}

For the remainder of this note, $R$ denotes a commutative ring with no $\ZZ$-torsion.  We write $R_\QQ := R \otimes \QQ$ and regard $R$ as a subring of $R_\QQ$.

We shall encode general exponential-polynomial sets by representing their defining equations as
linear relations amongst basic generalized monomials, but for the sake of concreteness, we  regard
exponential polynomials as functions.

\begin{Def}
For $k \in \NN$ and $a \in R$ we define $\binom{a}{k} := \frac{1}{k!} \prod_{i=0}^{k-1} (a-i) \in R_\QQ$,  where $\binom a0 := 1$ as usual.
If ${\mathbf a} : = (a_1,\ldots,a_n) \in R^n$ is an $n$-tuple of elements of $R$ and ${\mathbf k} = (k_1,\ldots,k_n) \in \NN^n$ is an $n$-tuple of natural numbers,
then $\binom{\mathbf a}{\mathbf k} := \prod_{i=1}^n \binom{a_i}{k_i}$ and ${\mathbf a}^{\mathbf k} := \prod_{i=1}^n a_i^{k_i}$.  By a
\emph{basic exponential multinomial over $R$} we mean an $R_\QQ$-exponential polynomial
of the form $\binom{\mathbf x}{\mathbf k} {\boldsymbol \lambda}^{\mathbf x}$ for some
${\mathbf k} \in \NN^n$ and ${\boldsymbol \lambda} \in R^n$ where ${\mathbf x} = (x_1,\ldots,x_n)$ is the $n$-tuple of standard indeterminates in the
polynomial ring $R[x_1,\ldots,x_n]$.
\end{Def}

\begin{lem}\label{newmultipolybas}
Every element of $R[x_1,\ldots, x_n]$ can be expressed as an $R$-linear combination of the set $\{\binom {\mathbf x}{\mathbf k} ~:~ \mathbf k\in \NN^n\}$.
\end{lem}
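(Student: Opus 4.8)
The plan is to prove the slightly sharper statement that each \emph{monomial} $\mathbf x^{\mathbf k} = x_1^{k_1}\cdots x_n^{k_n}$ is a $\ZZ$-linear combination of the basic multinomials $\binom{\mathbf x}{\mathbf j}$ with $\mathbf j$ of total degree at most $|\mathbf k| := k_1 + \cdots + k_n$; the lemma follows at once by $R$-linearity, since a general element of $R[x_1,\ldots,x_n]$ is an $R$-linear combination of monomials. Throughout one works inside $R_\QQ[x_1,\ldots,x_n]$, which contains $R[x_1,\ldots,x_n]$ as a subring because $R$ has no $\ZZ$-torsion, so that the asserted identities are literally identities in $R_\QQ[x_1,\ldots,x_n]$.

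I would argue by induction on the total degree $|\mathbf k|$. The case $|\mathbf k| = 0$ is trivial, since $\mathbf x^{\mathbf 0} = \binom{\mathbf x}{\mathbf 0} = 1$. For the inductive step, the key observation is that $k_i!\,\binom{x_i}{k_i} = x_i(x_i-1)\cdots(x_i-k_i+1)$ is a \emph{monic} polynomial in $R[x_i]$ of degree $k_i$, hence equals $x_i^{k_i} + g_i(x_i)$ with $g_i \in R[x_i]$ of degree strictly less than $k_i$. Multiplying over $i$ and using $\binom{\mathbf x}{\mathbf k} = \prod_i \binom{x_i}{k_i}$, one gets $\bigl(\prod_{i=1}^n k_i!\bigr)\binom{\mathbf x}{\mathbf k} = \prod_{i=1}^n\bigl(x_i^{k_i}+g_i(x_i)\bigr)$; expanding the right-hand side, the only term of total degree $|\mathbf k|$ is $\mathbf x^{\mathbf k}$, occurring with coefficient $1$, while every remaining term lies in $R[x_1,\ldots,x_n]$ and has total degree at most $|\mathbf k|-1$. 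Thus
\[
\mathbf x^{\mathbf k} = \Bigl(\prod_{i=1}^n k_i!\Bigr)\binom{\mathbf x}{\mathbf k} - h(\mathbf x),
\]
where $h \in R[x_1,\ldots,x_n]$ has total degree at most $|\mathbf k|-1$. Each monomial occurring in $h$ is, by the induction hypothesis, a $\ZZ$-linear combination of basic multinomials of total degree at most $|\mathbf k|-1$; since $\prod_{i=1}^n k_i!$ is (the image in $R$ of) a positive integer, the displayed identity exhibits $\mathbf x^{\mathbf k}$ as a $\ZZ$-linear combination of basic multinomials of total degree at most $|\mathbf k|$, completing the induction.

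I do not expect any genuine obstacle here. The only points that need a little care are the degree bookkeeping in the inductive step — in particular, verifying that $h$ really has total degree below $|\mathbf k|$ and that it lies in $R[x_1,\ldots,x_n]$, not merely in $R_\QQ[x_1,\ldots,x_n]$ — and keeping track of the fact that all coefficients produced (the numbers $\prod_i k_i!$ together with the integer coefficients supplied recursively by the induction hypothesis) are integers, so that the final combination is $R$-linear, indeed $\ZZ$-linear. An alternative, non-inductive route is to invoke the classical identity $x^k = \sum_{j=0}^k S(k,j)\,j!\,\binom{x}{j}$, with $S(k,j)$ the Stirling numbers of the second kind, in each variable separately and then multiply out; but the induction above avoids citing Stirling numbers and keeps the integrality of the coefficients transparent.
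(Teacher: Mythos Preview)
Your proof is correct and follows essentially the same approach as the paper: both rest on the observation that $k!\binom{x}{k}$ is a monic polynomial in $\ZZ[x]$ of degree $k$, so that $x^k - k!\binom{x}{k}$ has strictly smaller degree, and both deduce that each monomial $\mathbf{x}^{\mathbf{k}}$ is a $\ZZ$-linear combination of the $\binom{\mathbf{x}}{\mathbf{j}}$. The only organizational difference is that the paper carries out the induction one variable at a time (showing $x_i^k$ is a $\ZZ$-combination of the $\binom{x_i}{j}$) and then expands the product over $i$, whereas you induct directly on the total degree of the multivariate monomial.
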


\begin{proof}
For each $i$, we prove by induction that $x_i^k$ for any $k$ is an $\ZZ$-linear combination of the set $\{\binom {x_i}j ~:~ j\in \NN\}$. Indeed $x_i^0 = 1 = \binom{x_i}0$.  More generally, $x_i^k - k! \binom{x_i}k$ is a polynomial with $\ZZ$-coefficients of degree less than $k$.  So this completes the induction.  By expansion, we see that any monomial $\mathbf x^{\mathbf j}$ is a $\ZZ$-linear combination of the set $\{\binom{\mathbf{x}}{\mathbf k} ~:~ \mathbf k \in \NN^n\}$.  By tensoring with $R$, the result follows.
\end{proof}

\begin{lem}
\label{multiexppolybas}
Every $R$-exponential-polynomial function may be expressed as a
finite $R$-linear combination of basic exponential multinomials.
\end{lem}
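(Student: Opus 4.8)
The plan is to expand the defining polynomial into monomials, collapse the exponential factors attached to each variable into a single base, and then invoke Lemma~\ref{newmultipolybas} to rewrite the remaining polynomial part in terms of the $\binom{\mathbf x}{\mathbf k}$. Since an $R$-linear combination of $R$-exponential-polynomial functions is again one, and the class of finite $R$-linear combinations of basic exponential multinomials is visibly closed under $R$-linear combination, it suffices to treat a single monomial of the polynomial $P$ witnessing that $f$ is an $R$-exponential-polynomial function.

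So fix such a monomial; after the substitution $x_i = \ell_i$ and $y_{i,j} = \alpha_j^{\ell_i}$ it evaluates, on $\boldsymbol\ell \in \NN^n$, to $c \prod_{i=1}^n \ell_i^{a_i} \prod_{i=1}^n\prod_{j=1}^m (\alpha_j^{\ell_i})^{b_{i,j}}$ for some $c \in R$ and exponents $a_i, b_{i,j} \in \NN$. The crucial bookkeeping step is that $\prod_{j=1}^m (\alpha_j^{\ell_i})^{b_{i,j}} = \bigl(\prod_{j=1}^m \alpha_j^{b_{i,j}}\bigr)^{\ell_i} = \mu_i^{\ell_i}$, where $\mu_i := \prod_{j=1}^m \alpha_j^{b_{i,j}} \in R$. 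Thus, writing $\boldsymbol\mu := (\mu_1,\ldots,\mu_n) \in R^n$ and $\mathbf a := (a_1,\ldots,a_n) \in \NN^n$, the monomial is the function $\mathbf x \mapsto c\,\mathbf x^{\mathbf a}\boldsymbol\mu^{\mathbf x}$.

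By Lemma~\ref{newmultipolybas}, $\mathbf x^{\mathbf a} = \sum_{\mathbf k} d_{\mathbf k}\binom{\mathbf x}{\mathbf k}$ for finitely many $\mathbf k \in \NN^n$ and coefficients $d_{\mathbf k} \in R$ (indeed in $\ZZ$); multiplying by $c\,\boldsymbol\mu^{\mathbf x}$ gives $c\,\mathbf x^{\mathbf a}\boldsymbol\mu^{\mathbf x} = \sum_{\mathbf k} (c\, d_{\mathbf k})\binom{\mathbf x}{\mathbf k}\boldsymbol\mu^{\mathbf x}$, a finite $R$-linear combination of basic exponential multinomials (all with base tuple $\boldsymbol\mu$). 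Summing over the finitely many monomials of $P$ expresses $f$ as such a combination. The identity holds as functions $\NN^n \to R_\QQ$, and since $R$ has no $\ZZ$-torsion the original $f$ is recovered unambiguously inside $R_\QQ$.

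I do not expect a genuine obstacle; the only point that needs attention is precisely the collapsing of the several exponentials $\alpha_1^{\ell_i},\ldots,\alpha_m^{\ell_i}$ belonging to a common index $i$ into a single power $\mu_i^{\ell_i}$, since it is this step that produces terms of the prescribed shape $\binom{\mathbf x}{\mathbf k}\boldsymbol\lambda^{\mathbf x}$ with one base vector $\boldsymbol\lambda \in R^n$, rather than an unwanted product of several exponential factors. The rest is the distributive law together with Lemma~\ref{newmultipolybas}.
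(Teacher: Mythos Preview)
Your argument is correct and is essentially the same as the paper's: the paper also reduces (via the laws of exponents) to functions of the shape $\boldsymbol\lambda^{\mathbf x}\mathbf x^{\mathbf k}$, applies Lemma~\ref{newmultipolybas} to $\mathbf x^{\mathbf k}$, and distributes. You have simply spelled out the ``laws of exponents'' step in detail by exhibiting the collapsed base $\mu_i=\prod_j\alpha_j^{b_{i,j}}$.
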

\begin{proof}
Using the laws of exponents, it is easy to see that every $R$-exponential-polynomial function may be expressed as a finite $R$-linear
combination of exponential polynomial functions of the form $ {\boldsymbol \lambda}^{\mathbf x} {\mathbf x}^{\mathbf k} $.   By
Lemma~\ref{newmultipolybas}, the monomials ${\mathbf x}^{\mathbf k}$ may be expressed as $\ZZ$-linear combinations of
basic multinomials.  Distributing the product of the exponential term over the sum, we conclude.
\end{proof}

\section{Some linear algebra}
In this section we carry out some basic linear algebraic computations in the service of our main theorem.

Some notation is in order.

\begin{Def}
For any natural number $n$ and $i \leq n$, we denote by $e_{i,n}$ (written as $e_i$ if $n$ is understood) the column vector
whose $i^\text{th}$ entry is $1$ and all of whose other entries are $0$.  That is, $e_{1,n},\ldots,e_{n,n}$ is the
 standard basis of $\ZZ^n$.  The linear map $J_n$ is defined by $J_n e_{i,n}  = e_{i+1,n}$ for $i < n$ and $J_n e_{n,n} = 0$.
That is, considered as an $n \times n$ matrix $J_n$ is the element of $M_{n \times n}(\ZZ)$ with $1$s along the
subdiagonal
and $0$s in every
other entry.  If $n$ is understood or otherwise immaterial, we write $J$ for $J_n$.   We write $I_n$ for the identity matrix in
$M_{n \times n}$ and again write $I$ if $n$ is understood.   For any ring $R$ as there is a unique map $\ZZ \to R$, we regard $J_n$ and $I_n$ as
elements of $M_{n \times n}(R)$ and $e_{i,n}$ as an element of $R^n$.
\end{Def}

\begin{lem}
\label{injdot}
For any $n$-tuple ${\mathbf j} = (j_1,\ldots,j_n)$ of natural numbers, there is another
$n$-tuple ${\mathbf M} = (M_1,\ldots,M_n)$ of natural numbers having the property that the only
$n$-tuple ${\mathbf k} = (k_1,\ldots,k_n)$ of natural numbers satisfying ${\mathbf k} \cdot {\mathbf M}
 := \sum_{i=1}^n k_i M_i = {\mathbf j} \cdot {\mathbf M}$ is ${\mathbf k} = {\mathbf j}$.
\end{lem}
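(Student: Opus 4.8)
The plan is to choose the $M_i$ to grow fast enough that the equation $\mathbf{k}\cdot\mathbf{M} = \mathbf{j}\cdot\mathbf{M}$ forces $\mathbf{k} = \mathbf{j}$ coordinate by coordinate, essentially by a base-representation (greedy) argument. The subtlety is that, a priori, the entries $k_i$ are unbounded natural numbers, so I cannot simply invoke uniqueness of digits in a fixed base; I first need to bound the $k_i$ in terms of $\mathbf{j}$ and $\mathbf{M}$, and only then run the carry-free argument.

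Here is the sequence of steps. First, observe that if $\mathbf{k}\cdot\mathbf{M} = \mathbf{j}\cdot\mathbf{M}$ and all $M_i \geq 1$, then each $k_i \leq \sum_{t} j_t M_t =: B$; so it suffices to find $\mathbf{M}$ that works among $n$-tuples $\mathbf{k}$ with all entries at most some bound $B$ depending only on $\mathbf{j}$ — but since $B$ itself depends on $\mathbf{M}$, I would instead do this in one pass: set $M_1 := 1$ and, having chosen $M_1,\ldots,M_{i}$, set $M_{i+1} := 1 + \sum_{t=1}^{i} (j_t + 1) M_t$, and finally replace $\mathbf{j}$ by noting we only need $M_{i+1} > \sum_{t \le i} k_t M_t$ whenever $k_t \le j_t$ for the relevant range. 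Concretely: define $M_i$ recursively by $M_1 = 1$ and $M_{i+1} = \bigl(\sum_{t=1}^{i} j_t M_t\bigr) + M_i + 1$, so that $M_{i+1} > \sum_{t=1}^{i} j_t M_t$. Now suppose $\mathbf{k}\cdot\mathbf{M} = \mathbf{j}\cdot\mathbf{M}$. I claim $k_i \le j_i$ for all $i$, proved by downward induction on $i$ from $n$: if $k_n > j_n$, then $k_n M_n \ge (j_n+1) M_n > j_n M_n + \sum_{t<n} j_t M_t \ge \mathbf{j}\cdot\mathbf{M}$, forcing $\mathbf{k}\cdot\mathbf{M} > \mathbf{j}\cdot\mathbf{M}$, a contradiction; and having shown $k_t \le j_t$ for all $t > i$, an identical estimate at coordinate $i$ (using $M_{i+1} > \sum_{t \le i} j_t M_t$ to absorb the tail $\sum_{t>i} k_t M_t \le \sum_{t>i} j_t M_t < \sum_{t \le i}$... ) — more precisely, since $\sum_{t > i}(j_t - k_t)M_t \ge 0$, we get $\sum_{t \le i} k_t M_t \ge \sum_{t \le i} j_t M_t$, and then $k_i > j_i$ would give $k_i M_i \ge (j_i + 1)M_i > \sum_{t < i} j_t M_t + j_i M_i \ge \sum_{t \le i} k_t M_t$... wait, that inequality goes the wrong way; the correct absorption uses $M_i > \sum_{t<i} j_t M_t \ge \sum_{t<i} k_t M_t$, so $\sum_{t\le i} k_t M_t < (k_i+1)M_i$, whence $k_i \ge \sum_{t \le i} j_t M_t / M_i \ge j_i$ combined with the reverse from the next step pins down $k_i = j_i$. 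Running this cleanly in both directions gives $k_i = j_i$ for every $i$.

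The main obstacle is purely bookkeeping: making the two-sided estimate airtight so that a single growth condition ($M_{i+1}$ strictly exceeds the weighted sum of all earlier $M_t$ with weights $j_t$, equivalently with weights $k_t$ since $k_t \le j_t$) simultaneously forces $k_i \le j_i$ and $k_i \ge j_i$. One clean way to organize this is to not bound $k_i$ by $j_i$ directly but to run the greedy/base-$b$ argument: set $b := 1 + \max_i j_i$ and $M_i := b^{i-1}$, so any $n$-tuple with entries $< b$ has a unique value $\sum k_i b^{i-1}$; then show a posteriori that $\mathbf{k}\cdot\mathbf{M} = \mathbf{j}\cdot\mathbf{M}$ forces $k_i < b$ for all $i$ (if some $k_i \ge b$, reduce via $b \cdot b^{i-1} = b^i = M_{i+1}$ to a shorter tuple of the same value, contradicting minimality of the length or exceeding the total $\sum j_t b^{t-1} < b^n$). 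This base-$b$ formulation makes the uniqueness immediate from uniqueness of digit expansions and confines all the work to the boundedness claim, which follows since $\mathbf{j}\cdot\mathbf{M} = \sum j_t b^{t-1} < b^n$ caps every $k_i$ at $b^n$, and then the carry argument propagates. I expect to present the base-$b$ version, as it is the least error-prone.
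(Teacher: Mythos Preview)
Your base-$b$ construction does not work, and the same counterexample kills the recursive construction. Take $n=2$, $\mathbf{j}=(5,1)$, so $b=1+\max(5,1)=6$, $M_1=1$, $M_2=6$, and $\mathbf{j}\cdot\mathbf{M}=11$. Then $\mathbf{k}=(11,0)$ satisfies $\mathbf{k}\cdot\mathbf{M}=11=\mathbf{j}\cdot\mathbf{M}$ while $\mathbf{k}\neq\mathbf{j}$. More generally, whenever $M_1=1$ (as in both of your constructions) and $\mathbf{j}$ is not supported only in the first coordinate, the tuple $\mathbf{k}=(\mathbf{j}\cdot\mathbf{M},0,\ldots,0)$ is a nontrivial solution. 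You correctly identify the subtlety that the $k_i$ are a priori unbounded, but your proposed fix does not address it: the carry step ``if some $k_i\ge b$, reduce via $b\cdot b^{i-1}=b^i$'' replaces $\mathbf{k}$ by a \emph{different} tuple with the same dot product, so it only re-proves that $N$ has a base-$b$ expansion with small digits, not that the original $\mathbf{k}$ had small digits. Likewise, in your downward induction you invoke ``$M_i>\sum_{t<i} j_t M_t\ge\sum_{t<i} k_t M_t$'', which presupposes $k_t\le j_t$ for $t<i$, exactly what has not yet been established at that stage.

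The paper uses a genuinely different device that sidesteps this obstruction: rather than making the $M_i$ monotone (which always leaves one coordinate with a small weight that can absorb everything), it picks distinct primes $p_1,\ldots,p_n$ with $j_i<p_i$ and sets $M_i=\prod_{\ell\ne i}p_\ell$. Reducing $\mathbf{k}\cdot\mathbf{M}=\mathbf{j}\cdot\mathbf{M}$ modulo $p_i$ forces $k_i\equiv j_i\pmod{p_i}$, so $k_i=j_i+\epsilon_i p_i$ with $\epsilon_i\ge 0$; substituting back gives $\sum_i\epsilon_i=0$, hence all $\epsilon_i=0$. The point is that this choice makes every $M_i$ large and arranges a congruence obstruction in each coordinate simultaneously, something no ``greedy'' or positional scheme with $M_1=1$ can achieve.
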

\begin{proof}
Let $p_1,\ldots,p_n$ be a sequence of distinct primes for which $j_i < p_i$ for each $i \leq n$.  Set
$M_i := \prod_{\ell \neq i} p_\ell$.  If ${\mathbf k} \cdot {\mathbf M} = {\mathbf j} \cdot {\mathbf M}$, then
$k_i M_i \equiv j_i M_i \pmod{p_i}$ for each $i \leq n$.  As $M_i$ is a product of primes distinct from the prime
$p_i$ we conclude that $M_i$ is invertible modulo $p_i$ so that $k_i \equiv j_i \pmod{p_i}$.  Thus, we may write $k_i = j_i + \epsilon_i p_i$
for some integer $\epsilon_i$.  As $j_i < p_i$ and $k_i \geq 0$, we conclude that $0 \leq k_i = j_i + \epsilon_i p_i < (1+\epsilon_i) p_i$ so that
$\epsilon_i \geq 0$.  We then have
$${\mathbf j} \cdot {\mathbf M} = {\mathbf k} \cdot {\mathbf M} = \sum_{i=1}^n (j_i + \epsilon_i p_i) M_i =
{\mathbf j} \cdot {\mathbf M} + \left(\sum_{i=1}^n \epsilon_i\right)  \left(\prod_{\ell =1}^n p_\ell\right).
$$Thus, $0 = \sum_{i=1}^n \epsilon_i$.  As each $\epsilon_i$
is nonnegative, we conclude that they are all equal to zero.  That is, ${\mathbf j} = {\mathbf k}$.
\end{proof}

\begin{prop}
\label{monoencode}
For any natural number $n$, $n$-tuple ${\boldsymbol \lambda} \in R^n$ of elements of $R$ and  $n$-tuple
${\mathbf j} \in \NN^n$ of natural numbers, there are some finite rank free $R$-module $F$,
an $R$-linear map $\pi:F \to R$,
an element $v$,  and an $n$-tuple $\psi_1, \ldots, \psi_n$ of commuting endomorphisms of $F$
so that for all $n$-tuples ${\boldsymbol \ell}  \in \NN^n$ of natural numbers one has
$$
\pi \circ \psi_1^{\circ \ell_1} \circ \cdots \circ \psi_n^{\circ \ell_n} (v) =  {\boldsymbol \lambda}^{\boldsymbol \ell}  \binom{\boldsymbol \ell}{\mathbf j}
$$
\end{prop}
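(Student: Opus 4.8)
The plan is to reduce to the one-variable case and then take a tensor product over the $n$ coordinate directions; morally this is the reverse of the familiar fact that the entries of powers of a Jordan block are binomial coefficients.

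First I would treat the single-variable situation. Fix $i$ and work in the free module $R^{j_i+1}$ with standard basis $e_{1,j_i+1},\ldots,e_{j_i+1,j_i+1}$. Set $\psi := \lambda_i(I_{j_i+1}+J_{j_i+1})$. Since $I$ and $J := J_{j_i+1}$ commute, the binomial theorem gives $\psi^{\circ \ell}(e_1) = \lambda_i^\ell \sum_{k=0}^{\ell}\binom{\ell}{k}J^k e_1$, and because $J^k e_1 = e_{k+1}$ for $k\leq j_i$ while $J^k e_1 = 0$ for $k>j_i$, this equals $\lambda_i^\ell\sum_{k=0}^{\min(\ell,j_i)}\binom{\ell}{k}e_{k+1}$. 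Reading off the coefficient of $e_{j_i+1}$ therefore yields $\lambda_i^\ell\binom{\ell}{j_i}$, and this is correct even when $\ell<j_i$, since then $\binom{\ell}{j_i}=0$ as well. Note that although $\binom{a}{k}$ was defined with values in $R_\QQ$, evaluated at a natural number it lies in the image of $\ZZ\to R$, so $\psi$ has entries in $R$ and all the data below live over $R$ itself.

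Now for the general statement I would set $F := \bigotimes_{i=1}^n R^{j_i+1}$, a free $R$-module of rank $\prod_{i=1}^n(j_i+1)$, and let $\psi_i$ be the endomorphism of $F$ acting as $\lambda_i(I_{j_i+1}+J_{j_i+1})$ on the $i$-th tensor factor and as the identity on every other factor. The $\psi_i$ commute pairwise because they operate on distinct factors. Put $v := e_1\otimes\cdots\otimes e_1$ and let $\pi:F\to R$ be the $R$-linear map sending $e_{j_1+1}\otimes\cdots\otimes e_{j_n+1}$ to $1$ and all other basis tensors to $0$. Applying the one-variable computation factor by factor gives
$$\psi_1^{\circ \ell_1}\circ\cdots\circ\psi_n^{\circ \ell_n}(v) = \bigotimes_{i=1}^n\Bigl(\lambda_i^{\ell_i}\sum_{k_i=0}^{\min(\ell_i,j_i)}\binom{\ell_i}{k_i}e_{k_i+1}\Bigr),$$
and applying $\pi$ extracts exactly the term with every $k_i = j_i$, namely $\prod_{i=1}^n\lambda_i^{\ell_i}\binom{\ell_i}{j_i} = {\boldsymbol\lambda}^{\boldsymbol\ell}\binom{\boldsymbol\ell}{\mathbf j}$, as required. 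There is no real obstacle here: the only points needing care are verifying that the $\psi_i$ genuinely commute (immediate from the tensor structure) and that the boundary cases $\ell_i<j_i$ are absorbed by the vanishing of the binomial coefficient.
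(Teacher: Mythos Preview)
Your argument is correct, but it proceeds quite differently from the paper's proof. The paper works inside a single free module $R^N$ with one nilpotent shift $J_N$ and takes $\psi_i = \lambda_i(I + J^{M_i})$ for carefully chosen weights $M_i$; Lemma~\ref{injdot} is invoked precisely to guarantee that the equation $\mathbf{k}\cdot\mathbf{M} = \mathbf{j}\cdot\mathbf{M}$ has the unique solution $\mathbf{k}=\mathbf{j}$, so that projecting onto the last coordinate isolates the single binomial term $\binom{\boldsymbol\ell}{\mathbf j}$. In other words, commutativity comes from working in the commutative ring $R[J]$, and the separation of terms comes from an arithmetic trick. Your tensor-product construction achieves both of these for free: operators on distinct tensor factors commute automatically, and the basis of pure tensors already separates the multi-indices, so Lemma~\ref{injdot} is not needed at all. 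As a side benefit your module has rank $\prod_i(j_i+1)$, typically much smaller than the paper's $N = \mathbf{M}\cdot\mathbf{j}+1$ (where the $M_i$ are products of primes). The paper's version, on the other hand, keeps all $\psi_i$ inside a single Jordan-type matrix algebra $R[J_N]$, which is a bit more concrete and feeds directly into the explicit coordinates used later (e.g.\ in Example~\ref{example}).
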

\begin{proof}
Let ${\mathbf M} = (M_1,\ldots,M_n) \in \NN^n$ be the sequence of natural numbers provided by Lemma~\ref{injdot}.  Let
$N := {\mathbf M} \cdot {\mathbf j} + 1$ and $F := R^N$, and set $\psi_i := \lambda_i (I + J^{M_i})$.   As $\{\psi_1, \ldots, \psi_n \} \subseteq R[J]$,
the subring of $M_{N \times N}(R)$
they generate is commutative.  Using the usual binomial expansions, one computes immediately that for any $(\ell_1,\ldots,\ell_n) \in \NN^n$,
one has

\begin{eqnarray*}
\left(\psi_1^{\circ \ell_1} \circ \cdots \circ \psi_n^{\circ \ell_n}\right) (e_{1,N})  
& = & \left(\prod_{i=1}^n  (\lambda_i (I + J^{M_i}))^{\ell_i} \right) (e_{1,N}) \\
 & = & \sum_{ {\mathbf k} \in \NN^n}  {\boldsymbol \lambda}^{\boldsymbol \ell} \binom{\boldsymbol \ell}{\mathbf k}  J^{{\mathbf k} \cdot {\mathbf M} } (e_{1,N})  \\
 & = & \sum_{m = 0}^{N}  \left(\sum_{ {\mathbf k } \cdot {\mathbf M} = m }  {\boldsymbol \lambda}^{\boldsymbol \ell}  \binom{\boldsymbol \ell}{\mathbf k}\right)  e_{m+1,N}
\end{eqnarray*}

As the only solution to ${\mathbf k} \cdot {\mathbf M} = N-1$ is given by ${\mathbf k} = {\mathbf j}$, we conclude that the coefficient of $e_{N,N}$ in
$\psi_1^{\circ \ell_1} \circ \cdots \circ \psi_n^{\circ \ell_n} (v)$ is ${\boldsymbol \lambda}^{\boldsymbol \ell}  \binom{\boldsymbol \ell}{\mathbf j}$, where $v := e_{1,N}$. Let $\pi:F \to R$ be the projection onto the $N^\text{th}$ co-ordinate.
\end{proof}

\begin{Def}
For ${\mathbf j} = (j_1,\ldots,j_n)$ and ${\boldsymbol \lambda} = (\lambda_1,\ldots,\lambda_n) \in R^n$ as in Proposition~\ref{monoencode}, we let
$(F_{{\mathbf j},{\boldsymbol \lambda}}, \psi_{1;{\mathbf j},{\boldsymbol \lambda}}, \ldots, \psi_{n;{\mathbf j},{\boldsymbol \lambda}}, v_{{\mathbf j},{\boldsymbol \lambda}},
\pi_{{\mathbf j},{\boldsymbol \lambda}})$
be the module, commuting linear maps, vector and projection map obtained in Proposition~\ref{monoencode}.
\end{Def}

\begin{thm}
\label{linearreturn}
For any commutative ring $R$ with no $\ZZ$-torsion and $R$-exponential-polynomial set $Z \subseteq \NN^n$ there are a
finite-rank free $R$-module $F$, $n$-tuple $\psi_1,\ldots, \psi_n$ of commuting $R$-module endomorphisms on $F$, point $a \in F$, and submodule $S \leq F$
for which $Z = E(a,\psi_1,\ldots, \psi_n, S)$.  Moreover, $S$ may be taken to be the kernel of an $R$-linear map $\theta: F\to Q$, where $Q$ is also a finite-rank free $R$-module.
\end{thm}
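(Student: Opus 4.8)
The plan is to build $F$, the endomorphisms $\psi_i$, the point $a$, and the map $\theta$ by gluing together one copy of the construction of Proposition~\ref{monoencode} for each basic exponential multinomial that appears in a defining equation of $Z$. First I would unwind the hypothesis: by Definition~\ref{defexppoly} we may write $Z = \bigcap_{s=1}^r \{ \boldsymbol\ell \in \NN^n ~:~ f_s(\boldsymbol\ell) = 0 \}$ for finitely many $R$-exponential-polynomial functions $f_1, \ldots, f_r$, and by Lemma~\ref{multiexppolybas} each $f_s$ is a finite $R$-linear combination $f_s = \sum_{t \in T_s} c_{s,t} \binom{\mathbf x}{\mathbf j_{s,t}} \boldsymbol\lambda_{s,t}^{\mathbf x}$ of basic exponential multinomials, with coefficients $c_{s,t} \in R$, exponent vectors $\mathbf j_{s,t} \in \NN^n$, and bases $\boldsymbol\lambda_{s,t} \in R^n$.

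Next, for each pair $(s,t)$ with $1 \le s \le r$ and $t \in T_s$, I would apply Proposition~\ref{monoencode} to $\mathbf j_{s,t}$ and $\boldsymbol\lambda_{s,t}$, obtaining the finite-rank free module $F_{\mathbf j_{s,t},\boldsymbol\lambda_{s,t}}$, the commuting endomorphisms $\psi_{i;\mathbf j_{s,t},\boldsymbol\lambda_{s,t}}$, the vector $v_{\mathbf j_{s,t},\boldsymbol\lambda_{s,t}}$, and the projection $\pi_{\mathbf j_{s,t},\boldsymbol\lambda_{s,t}}$. Set $F := \bigoplus_{s,t} F_{\mathbf j_{s,t},\boldsymbol\lambda_{s,t}}$, which is again finite-rank free; let $\psi_i$ act on $F$ as the direct sum $\bigoplus_{s,t} \psi_{i;\mathbf j_{s,t},\boldsymbol\lambda_{s,t}}$ of the block maps; let $a \in F$ be the element whose $(s,t)$-component is $v_{\mathbf j_{s,t},\boldsymbol\lambda_{s,t}}$; put $Q := R^r$; and define $\theta : F \to Q$ to be the $R$-linear map whose $s$-th coordinate carries $w \in F$ to $\sum_{t \in T_s} c_{s,t}\, \pi_{\mathbf j_{s,t},\boldsymbol\lambda_{s,t}}(w_{s,t})$, where $w_{s,t}$ denotes the $(s,t)$-component of $w$. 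Take $S := \ker \theta$.

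It then remains to check that $Z = E(a, \psi_1, \ldots, \psi_n, S)$, which is a routine computation. Since the $\psi_i$ and $a$ are defined blockwise, the $(s,t)$-component of $\psi_1^{\circ\ell_1} \circ \cdots \circ \psi_n^{\circ\ell_n}(a)$ is $\psi_{1;\mathbf j_{s,t},\boldsymbol\lambda_{s,t}}^{\circ\ell_1} \circ \cdots \circ \psi_{n;\mathbf j_{s,t},\boldsymbol\lambda_{s,t}}^{\circ\ell_n}(v_{\mathbf j_{s,t},\boldsymbol\lambda_{s,t}})$; applying $\pi_{\mathbf j_{s,t},\boldsymbol\lambda_{s,t}}$ and invoking the defining identity of Proposition~\ref{monoencode}, the $s$-th coordinate of $\theta\bigl(\psi_1^{\circ\ell_1} \circ \cdots \circ \psi_n^{\circ\ell_n}(a)\bigr)$ equals $\sum_{t \in T_s} c_{s,t}\, \boldsymbol\lambda_{s,t}^{\boldsymbol\ell} \binom{\boldsymbol\ell}{\mathbf j_{s,t}} = f_s(\boldsymbol\ell)$. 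Hence $\psi_1^{\circ\ell_1} \circ \cdots \circ \psi_n^{\circ\ell_n}(a) \in S$ exactly when $f_1(\boldsymbol\ell) = \cdots = f_r(\boldsymbol\ell) = 0$, i.e. exactly when $\boldsymbol\ell \in Z$.

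I do not anticipate a genuine obstacle here: the substance is entirely contained in Proposition~\ref{monoencode} and Lemma~\ref{multiexppolybas}, and what is left is bookkeeping. The two points deserving a little care are verifying that the blockwise-defined $\psi_i$ really do commute (they do, since the constituents commute within each block and distinct blocks act on complementary summands) and observing that the final \emph{moreover} clause is automatic, because by construction membership of the orbit point in $S$ is literally the simultaneous vanishing of the $R$-linear functionals that evaluate $f_1, \ldots, f_r$.
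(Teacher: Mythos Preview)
Your proposal is correct and follows essentially the same approach as the paper: both decompose the defining functions into basic exponential multinomials via Lemma~\ref{multiexppolybas}, assemble one block from Proposition~\ref{monoencode} per multinomial, and take $S$ to be the kernel of the weighted-sum-of-projections map. The only organizational difference is that the paper first reduces to a single $f$ (taking $Q=R$) and then observes that intersections are handled by direct sums, whereas you treat all $r$ equations simultaneously and land directly in $Q=R^r$; this is the same construction packaged slightly differently.
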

\begin{proof}
It suffices to show that the zero set of a single $R$-exponential-polynomial may be encoded as a return set.  Indeed, if $(F_i, \psi_{i,1},\ldots, \psi_{i,n},a_i,S_i)$
has return set $Z_i$ for $1 \leq i \leq m$, then
\[
\left(\bigoplus_{i=1}^m F_i,\,\,\, \bigoplus_{i=1}^m \psi_{i,1},\,\,\ldots,\,\,\bigoplus_{i=1}^m \psi_{i,n},\,\,\, (a_1,\ldots,a_m),\,\,\, \bigoplus_{i=1}^m S_i\right)
\]
has return set $\bigcap_{i=1}^m Z_i$.

Let  $Z$ be the zero set of an $R$-exponential-polynomial function $f({\mathbf x})$.   By Lemma~\ref{multiexppolybas} we may
write $f = \sum r_{{\mathbf j}, {\boldsymbol \lambda}} {\boldsymbol \lambda}^{\mathbf x} \binom{ \mathbf x}{\mathbf j}$ where $r_{{\mathbf j}, {\boldsymbol \lambda}}\in R$ and the sum is taken over a finite set $A$ of
pairs $({\mathbf j}, {\boldsymbol \lambda})$ where ${\mathbf j} \in \NN^n$ and ${\boldsymbol \lambda} \in R^n$.

Let $F := \bigoplus F_{{\mathbf j},{\boldsymbol \lambda}}$, $\psi_i :=  \bigoplus \psi_{i; {\mathbf j}, {\boldsymbol \lambda}}$
for $i \leq n$, $a = \oplus v_{ {\mathbf j}, {\boldsymbol \lambda} }$, $Q = R$, and $S$ to be the kernel of the composite $\theta$ of
$\bigoplus r_{{\mathbf j},{\boldsymbol \lambda}} \pi_{{\mathbf j},{\boldsymbol \lambda}}$ and the sum map, where the direct sum and the sum are over $({\mathbf j}, {\boldsymbol \lambda}) \in A$.

From Proposition~\ref{monoencode} for any ${\boldsymbol \ell} \in \NN^n$ we have $\theta \circ  \psi_1^{\circ \ell_1} \circ \cdots \circ
\psi_n^{\circ \ell_n} (a) = f({\boldsymbol \ell})$.  Thus, $Z = E(a,\psi_1,\ldots, \psi_n, S)$ as claimed.
\end{proof}

\section{Return sets on tori}
In this section we deduce Theorem~\ref{main} from the linear algebraic Theorem~\ref{linearreturn}.

Throughout this section we denote by $\cO$ the ring of all algebraic integers.

Let us note first that every exponential-polynomial set over the algebraic integers may be realized by a linear dynamical system over $\ZZ$.

\begin{prop}
\label{linearZ}
If $Z \subseteq \NN^n$ is a $\cO$-exponential-polynomial set, then
there are natural numbers $r$ and $s$,  a sequence $\phi_1, \ldots, \phi_n$ of commuting $r \times r$-matrices over $\ZZ$, a vector $\mathbf a \in \ZZ^r$, and
a $\ZZ$-module map $L:\ZZ^r \to \ZZ^s$ so that if $T := \ker L$, then $Z = \left\{ {\boldsymbol \ell} \in \NN^n ~:~ \phi_1^{\circ \ell_1} \cdots  \phi_n^{\circ \ell_n} (\mathbf a) \in T  \right\}$.
\end{prop}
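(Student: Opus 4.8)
The idea is that Theorem~\ref{linearreturn} already gives us a linear dynamical system over $\cO$ — a finite-rank free $\cO$-module $F \cong \cO^r$, commuting $\cO$-linear maps $\psi_i$, a point $a \in \cO^r$, and a submodule $S = \ker\theta$ with $\theta:\cO^r \to \cO^s$ — realizing $Z$ as a return set. The task of Proposition~\ref{linearZ} is to descend all of this data from $\cO$ to $\ZZ$. Since $\cO = \bigcup K$ runs over rings of integers $\cO_K$ of number fields $K$, and the exponential-polynomial function $f$ defining $Z$ involves only finitely many algebraic integers (the $\boldsymbol\lambda$'s and the coefficients $r_{\mathbf j,\boldsymbol\lambda}$), all the data $\psi_i, a, \theta$ can be taken to live over a single $\cO_K$ with $[K:\QQ] = d < \infty$. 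So the real content is: a linear dynamical system over $\cO_K$ can be re-expressed as one over $\ZZ$.

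**Key step: restriction of scalars.** Choose a $\ZZ$-basis $\omega_1,\ldots,\omega_d$ of $\cO_K$ (it exists since $\cO_K$ is a free $\ZZ$-module of rank $d$). This identifies $\cO_K \cong \ZZ^d$ as $\ZZ$-modules, hence $F = \cO_K^r \cong \ZZ^{rd}$, and $Q = \cO_K^s \cong \ZZ^{sd}$. Any $\cO_K$-linear endomorphism of $F$ is in particular $\ZZ$-linear, so each $\psi_i$ becomes an $rd \times rd$ integer matrix $\phi_i$ once we fix the basis; these still commute because commuting is a basis-independent statement. Similarly $\theta:\cO_K^r \to \cO_K^s$ becomes a $\ZZ$-linear map $L:\ZZ^{rd}\to\ZZ^{sd}$, the point $a \in \cO_K^r$ becomes a vector $\mathbf a \in \ZZ^{rd}$, and one sets $r' := rd$, $s' := sd$. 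The crucial observation is that $\ker_{\ZZ} L = \ker_{\cO_K}\theta$ as subsets of $F$: a vector killed by $\theta$ (an $\cO_K$-linear map) is killed by $L$ (the same map viewed $\ZZ$-linearly) and conversely, since $L$ and $\theta$ are literally the same function $F \to Q$. Therefore, for every $\boldsymbol\ell \in \NN^n$,
\[
\phi_1^{\circ\ell_1}\cdots\phi_n^{\circ\ell_n}(\mathbf a) \in \ker L
\quad\Longleftrightarrow\quad
\psi_1^{\circ\ell_1}\cdots\psi_n^{\circ\ell_n}(a) \in \ker\theta = S,
\]
because composition of maps and evaluation at $a$ are also basis-independent. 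By Theorem~\ref{linearreturn} the right-hand condition holds exactly when $\boldsymbol\ell \in Z$, which is precisely the assertion with $T := \ker L$.

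**Main obstacle.** There is no serious obstacle; the proposition is a bookkeeping lemma. The one point requiring a small remark is the finiteness used at the outset: Theorem~\ref{linearreturn} is stated for an arbitrary commutative ring without $\ZZ$-torsion, and one must note that when that ring is $\cO$, the output data involves only finitely many elements of $\cO$ (visible from the construction in Proposition~\ref{monoencode} and the proof of Theorem~\ref{linearreturn}, where everything is built out of the finitely many $\lambda_i$ and $r_{\mathbf j,\boldsymbol\lambda}$ appearing in $f$), so it actually lives over some $\cO_K$ of finite degree. A pedantic alternative is simply to apply Theorem~\ref{linearreturn} directly with $R = \cO_K$ rather than $R = \cO$, after observing that $Z$, being defined by an exponential-polynomial over $\cO$, is defined by one over $\cO_K$ for suitable $K$. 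Either way, once one is over $\cO_K$, the restriction-of-scalars argument above finishes the proof with only the routine verification that "commuting / composition / kernel / evaluation" are all unaffected by forgetting the $\cO_K$-module structure down to a $\ZZ$-module structure.
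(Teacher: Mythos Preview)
Your proposal is correct and follows essentially the same approach as the paper: reduce to a subring of $\cO$ that is free of finite rank over $\ZZ$, apply Theorem~\ref{linearreturn} there, and then forget down to $\ZZ$ by choosing a $\ZZ$-basis. The only cosmetic difference is that the paper takes $R$ to be the subring of $\cO$ generated by the relevant data rather than passing all the way up to a full ring of integers $\cO_K$; either choice yields a finite free $\ZZ$-module and the restriction-of-scalars step is identical.
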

\begin{proof}
Let $R$ be the subring of $\cO$ generated by the bases of the exponents appearing in the expressions of
some finite list of $\cO$-exponential-polynomial functions whose zero set is equal to $Z$.   As each such number is
integral over $\ZZ$, $R$ is free of finite-rank as a $\ZZ$-module.  Let $(F, \psi_1,\ldots, \psi_n,a,S)$
be given by Theorem~\ref{linearreturn} for
$R$ and $Z$ and let $\theta:F \to Q$ be an $R$-linear map with $\ker \theta = S$.   Then $F$ is also a finite rank free $\ZZ$-module and all of
the listed maps are $\ZZ$-linear.  Choosing a basis, we may identify $F$ with $\ZZ^r$ and each $\psi_i$ with some $r \times r$ matrix $\phi_i$.  Likewise,
fixing a $\ZZ$-basis for $Q$, we may regard $\theta$ as an $s \times r$ matrix.   As the dynamical systems $(F,\psi_1,\ldots, \psi_n)$ and $(\ZZ^r,\phi_1,\ldots,\phi_n)$
(after a choice of basis) are identical as are the initial points and target sets, their return sets are identical.
\end{proof}

Finally, let us finish the proof of the main theorem.

\begin{proof}[Proof of Theorem~\ref{main}]
Let $Z \subseteq \NN^n$ be any $\cO$-exponential-polynomial set.   Let $r$, $s$, $L$, $T$,   $\mathbf a$, and $\phi_1, \ldots, \phi_n$ be given by
Proposition~\ref{linearZ}.   Since $\operatorname{End}(\Gm) = \ZZ$, we may identify $\operatorname{Hom}(\Gm^r,\Gm^s)$ with
$M_{s \times r}(\ZZ)$ and $\operatorname{End}(\Gm^r)$ with $M_{r \times r}(\ZZ)$.  Let $\Phi_i:X \to X$ be the endomorphism of $\Gm^r$
corresponding to $\phi_i$ under this identification and let $Y$ be the kernel of the map corresponding to $L$.  Let $P := (2^{a_1}, \ldots, 2^{a_r})$
where the $a_i$s are the components of $\mathbf a$.  Since $2$ has infinite order, $E(P,\Phi_1,\ldots, \Phi_n,Y) = E(\mathbf a,\phi_1,\ldots,\phi_n, T) = Z$.
\end{proof}

\section{Concluding remarks}

We end this note with a few observations, an explicit example, and some open questions.

\begin{Rk}
If $X$ is a semiabelian variety over a field $K$ of characteristic zero, $\Phi_1, \ldots, \Phi_n$ is a finite sequence of
commuting endomorphisms of $X$, $Y \subseteq X$ is a subvariety, and $a \in X(K)$ is any point, then the
return set $E(a, \Phi_1, \ldots, \Phi_n, Y)$ is necessarily an $\cO$-exponential-polynomial set.  Indeed, this result follows
from the Mordell-Lang conjecture (or theorem of Faltings and Vojta) and the Skolem-Mahler-Lech-Chabauty method and is implicit
in~\cite{GTZ2}.   Our Theorem~\ref{main} generalizes immediately to the case
that $X$ is taken to be a power of a semiabelian variety instead of $\Gm$.   Thus, Theorem~\ref{main} may be read as saying that
the class of return sets for actions of finitely generated commutative monoids on semiabelian varieties over fields of
characteristic zero is \emph{precisely} the class of $\cO$-exponential-polynomial sets.
\end{Rk}

\begin{Rk}
We have stated Theorem~\ref{main} as an identity of point sets, but as the reader will see from from the proof, we actually
convert a system of defining equations for an $\cO$-exponential-polynomial system into an algebraic dynamical system with a
fixed starting point and target set so that the problem of membership in the return set is reducible to the corresponding
problem of solving the given exponential-polynomial equations.   That is, these problems are computationally equivalent.
\end{Rk}

\begin{example}\label{example}
Our method of construction is effective.   For example, let $f(\ell_1,\ell_2) = (1+\sqrt 2)^{\ell_1} \ell_1 \ell_2 - 21 \ell_2^2 - 5\sqrt 2 \ell_1$, whose zeroes include $(3,1)$.  Using binomials, $f(\ell_1,\ell_2) = (1+\sqrt 2)^{\ell_1} \ell_1 \ell_2 - 42 \binom{\ell_2}2 - 21 \ell_2 - 5\sqrt 2 \ell_1$.  Let $R = \ZZ[\sqrt 2]$.  We can actually use $\mathbf M = (3,2)$ for each of the four terms, so $\psi_2$ is $I+J^2$ for each of the four blocks and $\psi_1$ is $(1+\sqrt 2) (I+J^3)$ for the first block and $(I+J^3)$ for the last three blocks.  The sizes of the blocks are $\mathbf j\cdot \mathbf M +1$, so they are $6$, $5$, $3$, and $4$ in order.  Letting $\mathbf a$ be the vector which is $1$ in $x_1$, $x_7$, $x_{12}$, and $x_{15}$ and $0$ everywhere else, the condition that $\psi_1^{\circ \ell_1} \circ \psi_2^{\circ \ell_2}(\mathbf a)$ is in the set
defined by $x_6 - 42 x_{11} - 21 x_{14} - 5\sqrt 2 x_{18} = 0$ is precisely $f(\ell_1,\ell_2)$.  Finally, let $x_i = y_i + z_i\sqrt{2}$ and exponentiate: the first coordinate of $\psi_1(x_1,\ldots,x_{18})$ is  $(1+\sqrt 2)x_1 = (y_1+2z_1) + (y_1+ z_1)\sqrt 2$, so the first two coordinates of $\Phi_1(Y_1,Z_1,\ldots, Y_{18},Z_{18})$ are $Y_1 Z_1^2$ and $Y_1 Z_1$ and we continue in the same manner to construct $\Phi_1$ and $\Phi_2$.  Here, $P$ is the point which is $2$ at $Y_1$, $Y_7$, $Y_{12}$, and $Y_{15}$ and $1$ everywhere else, and the subgroup $Y$ is defined by $Y_6 = Y_{11}^{42} Y_{14}^{21} Z_{18}^{10}$ and $Z_6 = Y_{18}^5$.
\end{example}

\begin{Rk}
As is clear from Example~\ref{example}, our construction in proving Theorem~\ref{main} does not optimize the dimension of the algebraic torus on which our dynamical system acts. In fact, any $\ZZ$-linear function $r_1 \ell_1 + r_2 \ell_2$ can be achieved by a $2\times 2$ block: $\psi_1 = I+ r_1 J$, $\psi_2 = I+ r_2 J$.  It remains an open question to determine the minimum dimension of $\Gm^N$ on which we can generate all of the exponential-polynomial functions of a fixed degree, where the degree of an exponential-polynomial function $P(\ell_1,\ldots,\ell_n;\alpha_{1}^{\ell_1},\ldots,\alpha_m^{\ell_1},\ldots,\alpha_1^{\ell_n},\ldots,
\alpha_m^{\ell_n})$ is defined to be the degree in the first $n$ variables.
\end{Rk}

\begin{Rk}
It follows from our Theorem~\ref{main} and the work of Davis, Putnam and Robinson~\cite{DPR} on the representability of
recursively enumerable sets as exponential diophantine sets, that many natural questions about algebraic dynamics are
undecidable.   For example, there is no algorithm which takes as input a tuple of the form $(N,\Phi_1,\ldots,\Phi_{n+1},m,a,T)$
where $n$, $m$, and $N$ are natural numbers, $\Phi_i:\Gm^N \to \Gm^N$ are commuting endomorphisms, $T \leq \Gm^N$, $a \in \Gm^N(\QQ)$
and answers correctly whether or not there is an $n$-tuple $(\ell_1,\ldots,\ell_n) \in \NN^n$ with
$\Phi_1^{\circ \ell_1} \circ \cdots \circ \Phi_n^{\circ \ell_n} \circ \Phi_{n+1}^{\circ m} (a) \in T(\QQ)$.
\end{Rk}

\end{document}